\newtheorem*{rep@theorem}{\rep@title}
\newcommand{\newreptheorem}[2]{%
\newenvironment{rep#1}[1]{%
 \def\rep@title{#2~\ref{##1}}%
 \begin{rep@theorem}}%
 {\end{rep@theorem}}}
\theoremstyle{plain}
\newtheorem*{thm*}{Theorem}
\newtheorem{thm}{Theorem}
\newtheorem{cor}[thm]{Corollary}
\newtheorem*{lem*}{Lemma}
\newtheorem{prp}[thm]{Proposition}
\theoremstyle{definition}
 \newcommand{\R}{\mathbb{R}}
\newcommand{\CC}{\mathbb{C}}
\newcommand{\PP}{\mathscr{P}}
\newcommand{\RS}{\mathcal{R}}
\newcommand{\comp}{\mbf{c}}
\newcommand{\vanish}[1]{}
\def\({\left(}
\def\){\right)}
\def\no={\,{\,|\!\!\!\!\!=\,\,}}
\def\no={\,{\,|\!\!\!\!\!=\,\,}}
\newcommand{\xqedhere}[2]{%
  \rlap{\hbox to#1{\hfil\llap{\ensuremath{#2}}}}}
\newcommand{\cm}[1]{}
\newcommand\mc[1]{\mathcal{#1}}
\newcommand\mbf[1]{\mathbf{#1}}
\newcommand\mr[1]{\mathrm{#1}}
\newcommand{\HA}{\mathcal{A}}
\newcommand\RR{\mathbb{R}}
\newcommand{\s}{\mathbf{s}}
\begin{document}
\title[Boundary manifolds and Lefschetz Theorems]{A note on boundary manifolds of arrangements}
\author{Karim A.~Adiprasito}
\address{School of Mathematics, Institute for Advanced Study, Princeton, US
and \newline
Einstein Institute of Mathematics, University of Jerusalem, Jerusalem, Israel}
\email{adiprasito@math.fu-berlin.de, adiprasito@ias.edu}
\date{\today}
\thanks{K.~A.~Adiprasito acknowledges support by an IPDE/EPDI postdoctoral fellowship, a Minerva postdoctoral fellowship of the Max Planck Society, and NSF Grant DMS 1128155.
}

\begin{abstract}
We note an intimate connection between the Lefschetz Theorem for 
$c$-arrangements, and a theorem of Hironaka relating the complement 
of an arrangement to its boundary manifold. This results in a 
generalization of Hironaka's result.
\end{abstract}

\maketitle

If $\HA$ is any subspace arrangement in $\RR^d$ or $\CC^d$, then the boundary manifold $\mr{B}(\HA)$ of $\HA$ is defined as the boundary of $\mbf{N}(|\HA|)$, the regular neighborhood of the underlying variety of $\HA$. The boundary manifold forms an important ingredient in the study of complements of arrangements: it is well-behaved enough to allow for a general study if $d$ is small, and especially fruitful for arrangements of lines in $\CC^2$ resp.\ $\mathbb{CP}^2$ : In this case, it follows by classical results that
\begin{compactitem}[$\circ$]
\item  the boundary manifold is not only built by pieces along the arrangement, it is built by pieces that fiber over the circle, and hence is a graphmanifold in the sense of Waldhausen.
\item Finally, $\mr{B}(\HA)$ admits a nonpositive length metric and is therefore aspherical (unless it is a pencil)~\cite{Schroeder}. Note that this is not true for the complement of complex hyperplane arrangements.
\end{compactitem}
Finally, the boundary manifold is combinatorial:
\begin{prp}[cf.\ \cite{CS}]
The boundary manifold of a complex hyperplane arrangement $\HA$ in $\CC^2$ is determined, up to homotopy equivalence, by the order complex of its intersection poset.
\end{prp}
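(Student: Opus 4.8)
The plan is to replace $|\HA|$ by a simple normal crossings divisor without changing the boundary manifold, and then to recognize $\mr B(\HA)$ as a plumbed graph manifold whose plumbing graph can be read off directly from the intersection poset. Since a regular neighborhood is unique up to ambient isotopy, I am free to work in any convenient model; so I would first pass to the projective closure and view $\HA$ as an arrangement of lines in $\mathbb{CP}^2$, then let $\pi\colon X\to\mathbb{CP}^2$ be the blow-up at the points of $|\HA|$ of multiplicity $\ge 3$ and set $\ol{\HA}=\pi^{-1}(|\HA|)$. A single blow-up at each such point already separates the branches, so $\ol{\HA}$ is a simple normal crossings divisor, with components the proper transforms $\wt\ell\cong\mathbb{CP}^1$ of the lines and the exceptional curves $E_p\cong\mathbb{CP}^1$ of the multiple points.

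Next I would check that this does not disturb the boundary manifold: choosing a closed regular neighborhood $N$ of $|\HA|$ in $\mathbb{CP}^2$, all the blown-up points lie in the interior of $N$, so $\pi$ restricts to a homeomorphism $X\ssm\mr{int}\,\pi^{-1}(N)\xrightarrow{\,\sim\,}\mathbb{CP}^2\ssm\mr{int}\,N$; since $\pi^{-1}(N)$ is a regular neighborhood of $\ol{\HA}$ in $X$, passing to boundaries gives $\mr B(\HA)=\partial N\cong\partial\,\pi^{-1}(N)$. Then I would invoke the classical dictionary between normal crossings divisors on smooth complex surfaces and plumbing graphs (Mumford, Hirzebruch, Neumann): the boundary of a regular neighborhood of a simple normal crossings divisor $D=\bigcup_i D_i$ is the graph manifold obtained by plumbing circle bundles along the weighted dual graph of $D$ — one vertex per $D_i$, weighted by the pair $(\mr{genus}\,D_i,\ D_i\!\cdot\!D_i)$, one edge per point of $D_i\cap D_j$. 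Hence $\mr B(\HA)$ is determined, up to homeomorphism, by the weighted dual graph of $\ol{\HA}$.

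Finally I would observe that this dual graph is a function of $L(\HA)$ alone: all components have genus $0$; one has $E_p\!\cdot\!E_p=-1$ and $\wt\ell\!\cdot\!\wt\ell=1-\#\{p\in\ell:\mr{mult}(p)\ge 3\}$ (a line has self-intersection $1$ in $\mathbb{CP}^2$, dropping by $1$ with each blow-up of a point on it); and the edges encode precisely that $\wt\ell$ meets $E_p$ iff $p\in\ell$, that $\wt\ell$ meets $\wt{\ell'}$ iff $\ell\cap\ell'$ is a double point, and that distinct $E_p$ are disjoint. Every ingredient here — the atoms (lines), the coatoms (multiple points), the incidences $\ell<p$, and the multiplicities $\mr{mult}(p)=\#\{\text{atoms below }p\}$ — is exactly the rank-$\le 2$ poset $L(\HA)$, equivalently its order complex (a flag $\ell<p$ is a $2$-simplex, and the full bipartite line–point incidence, hence the dual graph, is recovered from the complex). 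So $\mr B(\HA)$ depends up to homeomorphism — a fortiori up to homotopy equivalence — only on the order complex. The step I expect to need the most care is the claim that blowing up to normal crossings leaves $\mr B(\HA)$ unchanged and that $\pi^{-1}(N)$ is again a regular neighborhood; everything after that is the standard plumbing dictionary plus bookkeeping of self-intersections. (One could bypass the algebraic geometry and build $N$ by hand — an Euler-number-$1$ disc bundle over each $\mathbb{CP}^1$, truncated near its incident points, glued at each point $p$ of multiplicity $m$ to the cone on the complement of the positive $m$-component Hopf link along tori prescribed by the incidence graph — which is the same assertion.)
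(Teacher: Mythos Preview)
Your argument is correct and is essentially the Cohen--Suciu route (the reference~[CS] in the paper): resolve to a simple normal crossings divisor, invoke the Mumford--Hirzebruch--Neumann plumbing dictionary, and read the weighted dual graph off the lattice. This yields the stronger conclusion that $\mr{B}(\HA)$ is determined up to \emph{homeomorphism}, not just homotopy type. The paper, by contrast, argues directly without blowing up: it decomposes $\mr{B}(\HA)$ in place into the tubular pieces over the open line strata and the vertex pieces $(S^2\setminus\{\text{disks}\})\times S^1$ coming from the Hopf-link structure of the local links, and then asserts that the pieces and gluings are combinatorial. The two decompositions coincide (your circle bundle over $E_p$ restricted away from the $m$ intersection disks is exactly $(S^2\setminus m\text{ disks})\times S^1$, since any circle bundle over a surface with boundary is trivial), so the difference is really one of packaging: your approach trades elementary local analysis for the plumbing machinery, and in return gets explicit control of the Euler numbers and hence the homeomorphism type.

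Two small points to clean up. First, your passage to $\mathbb{CP}^2$ silently changes the object: the boundary of a regular neighborhood of the projective closures $\overline{|\HA|}\subset\mathbb{CP}^2$ is a \emph{closed} $3$-manifold, whereas $\mr{B}(\HA)\subset\CC^2$ as defined in the paper is open along the ends of the lines. Either work affinely throughout (the same plumbing description applies, with noncompact tube pieces), or add $\ell_\infty$ so that the complements---hence the boundary manifolds---agree, and note that the affine intersection poset determines the projective one (parallel classes are the pairs of atoms with no common cover). Second, a flag $\ell<p$ is a $1$-simplex of the order complex, not a $2$-simplex; the order complex here is just the bipartite incidence graph, which is exactly what you use.
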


\begin{proof}
Observe that the homotopy type of $\mr{B}(\HA)$ is obtained by gluing
\begin{compactitem}[$\circ$]
\item tubular pieces (around the line strata of $\HA$) and
\item copies of $S^2\setminus \{\text{open disks}\} \times S^1$ at the vertices (because the vertex links are Hopf linked great circles). 
\end{compactitem}
Since the attaching maps and the local pieces are combinatorially determined, so is the homotopy type of the glued manifold. 
\end{proof}

Note that this does not work for complex hyperplane arrangements in $\CC^3$ by Rybnikov's example \cite{Rybnikov} and $2$-arrangements (following for instance a classical example of Ziegler).

A central motivation for the study of boundary manifolds of arrangements is provided by the study of arrangement complements initiated by Arnold, Brieskorn and others. Hironaka, in an attempt to study complements of complexified real arrangements, provided a beautiful result stating that the complement $\HA^{\comp}$ of the arrangement can be reobtained in a rather simple fashion that is hard to surpass in beauty. Recall: a poset is canonically interpreted topologically via the order complex.

\begin{thm}[Hironaka, cf.\ \cite{Hironaka}]
If $\HA$ is complexified real in $\CC^2$, then the complement $\HA^{\comp}$ is obtained as a quotient of $\mr{B}(\HA)$ over an embedding of the intersection poset $\mc{P}(\HA)$ of $\HA$.
\end{thm}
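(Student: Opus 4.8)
I would replace $\HA^{\comp}$ by the homotopy equivalent model $M:=\CC^{2}\setminus\mr{int}\,\mbf{N}(|\HA|)$, a $4$-manifold with $\partial M=\mr{B}(\HA)$ onto which $\HA^{\comp}$ deformation retracts by pushing in a collar of the boundary; the problem is then to reconstruct $M$, up to homotopy, from $\partial M$. Stratify $|\HA|$ by the intersection data: the $2$-strata are the punctured lines $\ell_{i}^{\circ}$ (a line with the multiple points lying on it removed) and the $0$-strata are the multiple points $p$, so that the poset of strata, with $\CC^{2}$ adjoined as global minimum, is exactly $\mc{P}(\HA)$, and $|\HA|$ is homotopy equivalent to the order complex of the proper part $\mc{P}(\HA)_{>\widehat{0}}$ --- which here is a graph, the Hasse diagram. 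I would then record the standard local models compatible with this stratification: near a multiple point $p$ of multiplicity $m$ the pair $(\mbf{N},\partial\mbf{N})$ is modelled on $(D^{4},S^{3})$ with $|\HA|$ the cone on $m$ fibres of the Hopf fibration, so the corresponding chunk of $\mr{B}(\HA)$ is the Seifert-fibred complement of $m$ Hopf circles in $S^{3}$ --- a circle bundle over an $m$-punctured $2$-sphere; over $\ell_{i}^{\circ}$ the chunk of $\mr{B}(\HA)$ is the unit normal circle bundle, a circle bundle over $\ell_{i}^{\circ}$; and these chunks are glued along boundary tori according to the Hasse diagram of $\mc{P}(\HA)_{>\widehat{0}}$. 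This is precisely the graphmanifold/combinatorial picture behind the Proposition above, and in particular it gives an embedding of the order complex of $\mc{P}(\HA)$ into $\mr{B}(\HA)$ built one stratum at a time, with a vertex inside each chunk and edges threading the gluing tori.

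The complexified-real hypothesis enters next. Writing $\HA_{\RR}\subset\RR^{2}$ for the real arrangement, the Salvetti construction --- equivalently a stratified Morse function on $\CC^{2}$ adapted to the flag $\RR^{2}\subset\CC^{2}$ and to $|\HA|$ --- gives a strong deformation retraction of $M$ onto a polyhedron $K\subset M$ with one cell per incident pair (face of $\HA_{\RR}$, stratum of $|\HA|$). The plan is to run this retraction relative to $\partial M$, so that on $\partial M=\mr{B}(\HA)$ it restricts to a surjection $q\colon\mr{B}(\HA)\to K$ whose only failure of injectivity is the fibrewise collapse of the circle bundles of the chunks onto base sections, those sections being glued along the embedded copy of the order complex of $\mc{P}(\HA)$. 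Granting such a description, $M$, and hence $\HA^{\comp}$, is identified as the quotient of $\mr{B}(\HA)$ in which a suitable neighbourhood of that embedded order complex is collapsed in the prescribed fibrewise manner --- the asserted quotient of $\mr{B}(\HA)$ over an embedding of $\mc{P}(\HA)$ --- and one checks the resulting attaching data is exactly Hironaka's.

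The main obstacle is this last structural statement about $q$, which I would split into three. First, \emph{coherence}: the fibrewise collapses on the Hopf-link chunks and on the tube chunks must be chosen to agree on the shared boundary tori, i.e.\ the Seifert fibration and the normal circle fibration must be matched there --- the classical but delicate identification that already makes $\mr{B}(\HA)$ a graphmanifold. Second, \emph{refinement}: although $K$ is a Salvetti-type complex indexed by the \emph{face} poset of $\HA_{\RR}$, the gluing locus of the fibre collapses seen along $\partial M$ must be shown to depend only on the coarser \emph{intersection} poset $\mc{P}(\HA)$; this is where the dimension $d=2$ is genuinely needed, and is the same rigidity that makes the Proposition above true in $\CC^{2}$ and false in $\CC^{3}$ (Rybnikov). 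Third, \emph{ends}: the strata $\ell_{i}^{\circ}$ are non-compact, so the order complex must be embedded --- and the retraction run --- so that the open ``ends'' of the tube chunks are handled correctly, which is precisely where the affine rather than projective setting matters (regular neighbourhoods behave differently at infinity). I expect the refinement step to be the crux; a structurally cleaner alternative would be to deduce the whole statement from a Lefschetz-type theorem for $c$-arrangements, which is presumably the route of the present note.
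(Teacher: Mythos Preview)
The paper does not give its own proof of this statement: it is quoted as Hironaka's theorem with a citation, and the paper's contribution is the \emph{general} Hironaka theorem for $c$-arrangements in $\RR^d$, whose proof you should be comparing against. That proof proceeds exactly along the ``structurally cleaner alternative'' you anticipate in your last sentence: it invokes the combinatorial Morse function of Theorem~\ref{thm:hemisphere} (the out-$\iota_c(d)$ collapsibility result from \cite{A}), observes that for arrangements in projective general position this descends to a perfect Morse function on $(\RR^d,\mbf{N}(|\HA|))$, and then argues by induction on $d$ that the attaching spheres of the critical cells deform to an embedded copy of $\mc{P}(\HA)$; the base case is points in $\RR^c$, and general arrangements are handled by a generic projective homotopy to the projectively-general-position case.

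Your route is genuinely different and is much closer in spirit to Hironaka's original argument: you work directly with the graphmanifold decomposition of $\mr{B}(\HA)$ and the Salvetti model, exploiting the complexified-real hypothesis to get an explicit cellular retract. What your approach buys is exactly what the paper concedes it loses in generalizing --- the paper remarks that ``Hironaka's original theorem has several stronger aspects that we cannot recover in our more general setting; for instance, it provides a detailed description of the attaching map'' --- and your fibrewise-collapse picture is precisely the mechanism that would recover that description. Conversely, the paper's Morse-theoretic route buys uniformity: it needs no Salvetti complex, no complexified-real hypothesis, and no special $d=2$ analysis, and the three obstacles you flag (coherence across gluing tori, passage from face poset to intersection poset, and behaviour at the ends) are all absorbed into the single inductive step of the Lefschetz argument. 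Your plan is sound as a reconstruction of the classical case, but it is not the paper's proof.
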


Hironaka's original theorem has several stronger aspects that we cannot recover in our more general setting; for instance, it provides a detailed descriplition of the attaching map that depends on orientation data that is not as easy to recover. 

We provide a vast generalization and refinement of Hironaka's theorem. The crucial step by exploiting a duality between the Lefschetz theorem for complements (in its full generality proven in \cite{A}) and the Hironaka theorem. This paper is a close follow-up to \cite{A}, and we refer to the relevant notation there.

\section{The Lefschetz theorem for central $c$-arrangements}

Let us recall the following main lemma of \cite{A}.

\begin{thm}[Theorem 3.2, \cite{A}]\label{thm:hemisphere}
Let $\HA$ be a nonempty $c$-arrangement in projective general position in $\RR^d$, and let $\s$ denote a combinatorial stratification of $(\RR^d,|\HA|)$. Then, for any $k$-dimensional subspace $H$ of $\s$ containing an element of $\HA$, the pair $(\RS(\s,H), \RS(\s,|\HA|\cap H))$ is out-$\iota_c(d)$ collapsible.
\end{thm}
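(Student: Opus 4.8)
The plan is to prove the statement by induction on the ambient dimension $d$, with the base case $d\le c$ being immediate because $|\HA|$ is then empty or a single point and the pair collapses in the trivial range. Since a $k$-dimensional subspace $H$ of $\s$ is itself an intersection of elements of $\HA$, restriction turns $(\RS(\s,H),\RS(\s,|\HA|\cap H))$ into a pair of exactly the same shape for the localised $c$-arrangement inside $H\cong\RR^k$; using monotonicity of $\iota_c$ it is therefore enough to treat the case where $H$ is the whole stratified space. For the inductive step I would, invoking projective general position, fix a hyperplane $G$ through the origin transverse to every element of $\HA$; by the combinatorial genericity built into a combinatorial stratification one may take $G$ to be carried by $\s$, and then $\s|_G$ is again a combinatorial stratification. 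The slice $\HA\cap G$ is once more a $c$-arrangement in projective general position in $\RR^{d-1}$, so the inductive hypothesis gives out-$\iota_c(d-1)$ collapsibility of $(\RS(\s,G),\RS(\s,|\HA|\cap G))$.

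Next I would sweep the two hemispheres cut off by $G$ toward the equator, introducing the combinatorial slabs between consecutive critical sections one at a time. The core claim is that each such slab out-collapses -- with free faces taken in the complement of $|\HA|$, i.e.\ in the outward direction -- onto its inner wall together with the portion of $|\HA|$ it meets, the reason being that over a slab the arrangement is essentially a product, so the collapse can be manufactured by applying the inductive hypothesis in the fibre direction. Concatenating these slab collapses with the $(d-1)$-dimensional collapse already available on $G$, and bookkeeping the loss of collapsing depth incurred wherever a slab meets $|\HA|$, produces out-$\iota_c(d)$ collapsibility; here $\iota_c(d)$ is precisely the number output by this recursion (one unit of depth lost per $c$ successive sections, which is how $\iota_c$ is defined).

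The technical engine behind the slab step is a local statement that I would isolate and prove in parallel: for every stratum $\sigma$ of $\s$ contained in $|\HA|$, the link pair $(\Lk(\sigma,\RS(\s,H)),\Lk(\sigma,\RS(\s,|\HA|\cap H)))$ is, under the canonical identification of $\Lk(\sigma,\s)$ with a combinatorial stratification of a sphere, again of the form governed by the theorem, now for the localisation $\HA_\sigma$ in dimension strictly below $d$, and the latter inherits projective general position. This is what lets the slab collapses be carried out coherently in a neighbourhood of $|\HA|$, and it is exactly here that the \emph{outward} refinement is indispensable: only outward collapses of the links of the arrangement strata patch together to an outward collapse of the ambient complex.

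The main obstacle, where essentially all of the real work sits, is the combinatorics of the collapsing order: one must interleave the collapse on $G$, the collapses of the individual slabs, and the link collapses at the arrangement strata into a single globally admissible sequence, so that no face of $\RS(\s,|\HA|\cap H)$ is exposed before the very end, the partial orders agree on every overlap, and -- the delicate point -- the aggregate collapsing depth comes out to be exactly $\iota_c(d)$ rather than something larger. A secondary but genuine difficulty is the simultaneous inheritance of projective general position, which must be checked to survive both the generic slice $\HA\cap G$ and all localisations $\HA_\sigma$, so that neither branch of the induction degenerates. Granting these points, the argument is the faithful combinatorial counterpart of the classical Lefschetz hyperplane-section method, carried out in the relative, stratification-theoretic language of \cite{A}.
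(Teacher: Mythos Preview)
Your opening reduction rests on a misreading of the hypothesis. The subspace $H$ is assumed to \emph{contain} an element of $\HA$, not to be an intersection of elements of $\HA$; in particular $k\ge d-c$. Consequently the restriction of $\HA$ to $H$ is not a $c$-arrangement inside $H$: the element $A\in\HA$ with $A\subset H$ has codimension $k-(d-c)<c$ in $H$ whenever $k<d$, while the remaining elements meet $H$ in codimension $c$. Your ``localise to $H$ and reduce to $H=\RR^d$'' step therefore does not go through, and the statement for general $H$ is not recovered by your argument.

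Even for $H=\RR^d$ your scheme differs structurally from the paper's. You slice by a hyperplane $G$ \emph{transverse} to every element of $\HA$ and induct on the ambient dimension; the paper instead inducts on $\dim H$, choosing at each step a codimension-one subspace $h\subset H$ that still \emph{contains} an element of $\HA$, and shows that the pair over $H$ out-$\iota_c(d)$ collapses onto the pair over $h$. The base case $H\in\HA$ is then immediate, since $|\HA|\cap H=H$ and the pair degenerates to $(\RS(\s,H),\RS(\s,H))$. Because $h$ again satisfies the hypothesis of the theorem in the \emph{same} ambient $\RR^d$, the target index $\iota_c(d)$ never changes along the induction, and the depth bookkeeping you single out as ``the delicate point''---reconciling $\iota_c(d-1)$ against $\iota_c(d)$ across the slab collapses---simply does not arise. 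Your transverse section buys a clean product structure on each slab, but at the price of exactly that bookkeeping; the paper's choice of $h$ makes the induction essentially one line.
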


Here $\iota_c(d) := \left\lfloor \nicefrac{d}{c} \right\rfloor  - 1$. 
Moreover, a \emph{collapsible pair} $(\Delta,\Gamma)$ is a relative CW complex such that a collapse of $\Delta$ restricts to a collapse of $\Gamma$; similarly, an \emph{out-$i$ collapsible} pair is given if a collapse of $\Delta$ restricts to a Morse funcion on $\Gamma$ whose critical indices are $i$. $\RS(\Delta,A)$ is the restriction of a CW complex to the cells supported in a set $A$.

\begin{proof}[Idea of proof] The proof is simple, and follows an induction procedure. If $H\in \HA$, the theorem is clear. Otherwise, consider a codimension one subspace $h$ of $H$ containing an element of $\HA$, and prove that $(\RS(\s,H), \RS(\s,|\HA|\cap H))$ out-$\iota_c(d)$ collapses to  $(\RS(\s,h), \RS(\s,|\HA|\cap h))$.
\end{proof}

The relevance of this theorem is explained by the fact that it forms the dual to the Morse index estimate of the Lefschetz Hyperplane theorem for complements of complex hyperplane arrangements by Dimca--Papadima \cite{DimcaPapadima}.
More generally, the previous theorem is essentially a dual version of the Lefschetz theorem for central $c$-arrangements. 

\begin{cor}[cf.\ \cite{A}]
Let $\HA$ be a central $c$-arrangement in $\R^d$, and let $H$ denote a general position central hyperplane in $\R^d$. Then $\HA^{\comp}$ is obtained from $\HA^{\comp}\cap H$ by attaching cells of dimension $d-\iota_c(d)+1$.
\end{cor}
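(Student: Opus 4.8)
The plan is to dualize Theorem~\ref{thm:hemisphere} and transport it through Alexander duality to the complement. First I would set up the dictionary between the combinatorial stratification $\s$ of $(\RR^d,|\HA|)$ and the complement $\HA^{\comp}$: choosing a generic triangulation of the one-point compactification $S^d$ refining $\s$, the complement $\HA^{\comp}$ deformation retracts onto the subcomplex of the dual block decomposition supported away from $|\HA|$, and likewise $\HA^{\comp}\cap H$ corresponds to the dual blocks inside $S^k$ away from $|\HA|\cap H$. Passing to the central cone and then to the projectivization, a general position central hyperplane $H$ in $\RR^d$ cuts down the ambient dimension $d$ to $k=d-1$ while the stratification of $H$ induced by $\HA$ is again a $c$-arrangement in projective general position; this is exactly the setup of Theorem~\ref{thm:hemisphere} with the roles of $H$ and $|\HA|$ as in the statement.

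Next I would invoke Theorem~\ref{thm:hemisphere}: the pair $(\RS(\s,H),\RS(\s,|\HA|\cap H))$ is out-$\iota_c(d)$ collapsible, i.e.\ there is a collapse of $\RS(\s,H)$ which restricts to a discrete Morse function on $\RS(\s,|\HA|\cap H)$ with all critical cells of index $\iota_c(d)$. Under Alexander duality inside $S^d$ (or inside the relevant sphere), a discrete Morse function with critical cells of index $i$ on a subcomplex dualizes to a handle/cell attachment of complementary dimension: a critical $i$-cell upstairs becomes, on the complement side, a cell of dimension $d-i$ (up to the usual off-by-one shift coming from the compactification, which accounts for the ``$+1$'' in the statement). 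A collapse of the ambient complex dualizes to the statement that the big complement is built from the small complement by attaching exactly these dual cells. Concretely: the collapse of $\RS(\s,H)$ onto $\RS(\s,h)$ for a codimension-one $h$ (as in the idea of proof of Theorem~\ref{thm:hemisphere}) dualizes to $\HA^{\comp}$ being obtained from $(\HA\cap h)^{\comp}$ by attaching cells; iterating down to $H$ itself and tracking dimensions gives that the cells attached to pass from $\HA^{\comp}\cap H$ to $\HA^{\comp}$ all have dimension $d-\iota_c(d)+1$.

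Then I would reconcile this with the complex picture: the real central $c$-arrangement in $\RR^d$ has a complement whose homotopy type is (by the standard real-to-complex translation used in~\cite{A}, and matching the Dimca--Papadima framework~\cite{DimcaPapadima}) the relevant complex arrangement complement, and a general position central real hyperplane plays the role of a generic complex hyperplane section. The Morse-theoretic outcome above says precisely that the inclusion $\HA^{\comp}\cap H \hookrightarrow \HA^{\comp}$ is, up to homotopy, a relative CW-pair with cells only in dimension $d-\iota_c(d)+1$, which is the assertion of the Corollary.

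The main obstacle I expect is the bookkeeping in the Alexander-duality step: getting the dimension shift exactly right (why critical index $\iota_c(d)$ on the stratification side produces attached cells of dimension $d-\iota_c(d)+1$ and not $d-\iota_c(d)$ or $d-\iota_c(d)+2$), and checking that the out-$i$ collapsibility of the \emph{pair}—not just of each complex separately—is what guarantees that the attaching happens relative to the hyperplane section rather than merely in the absolute complement. This requires care with the compactification/decompactification and with the fact that $\RS(\s,|\HA|\cap H)$ sits inside $\RS(\s,H)$ in a way compatible with the dual blocks; once that compatibility is pinned down, the rest is the formal duality argument.
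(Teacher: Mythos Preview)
Your approach is essentially the paper's: the paper's entire proof is the single sentence ``Pass to the Alexander dual of the Morse function given by Theorem~\ref{thm:hemisphere},'' and your plan is precisely an unpacking of that sentence via the dual block decomposition and the dimension count for dual cells. Your third paragraph (reconciling with the complex picture and Dimca--Papadima) is unnecessary here, since the Corollary is stated and proved entirely for real $c$-arrangements; drop it, and what remains matches the paper's argument.
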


\begin{proof}
Pass to the Alexander dual of the Morse function given by Theorem \ref{thm:hemisphere}. 
\end{proof}

In addition to this important fact, Theorem \ref{thm:hemisphere} points to a generalization of Hironaka's theorem to higher dimensions. 

\section{Hironaka's theorem: the complement and the boundary manifold}

\begin{thm}[The general Hironaka Theorem]
Let $\HA$ denote a $c$-arrangement in $\RR^d$, $c>1$. 
Then $\HA^{\comp}$ is a quotient of $\mr{B}(\HA)$ over an embedding $\varphi:\mc{P}(\HA) \hookrightarrow \mr{B}(\HA)$.
\end{thm}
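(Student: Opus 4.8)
The plan is to recognise the asserted statement as the Alexander dual, taken relative to $\mr{B}(\HA)$, of the collapsing lemma Theorem~\ref{thm:hemisphere} --- in the same spirit as the Corollary above, which is its Alexander dual taken relative to a hyperplane section. After passing to the appropriate compact model, one writes the ambient space as $X=\mbf{N}\cup_{\mr{B}(\HA)}\mbf{M}$, where $\mbf{N}=\mbf{N}(|\HA|)$ is a regular neighbourhood of the underlying variety, $\mbf{M}=\overline{X\setminus\mbf{N}}$ is the complementary codimension-zero submanifold, and $\mbf{N}\cap\mbf{M}=\mr{B}(\HA)=\partial\mbf{N}=\partial\mbf{M}$. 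Since $\mbf{M}$ deformation retracts onto $\HA^{\comp}$, it suffices to produce an embedding $\varphi\colon\mc{P}(\HA)\hookrightarrow\mr{B}(\HA)$ of the (order complex of the) intersection poset for which $\mbf{M}$ is the quotient $\mr{B}(\HA)/\varphi(\mc{P}(\HA))$.

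First I would collapse the tube onto the poset. Fixing a combinatorial stratification $\s$ of $(\RR^d,|\HA|)$ as in Theorem~\ref{thm:hemisphere}, and applying that theorem --- together with the collapsing scheme of its proof, and the stratification machinery of \cite{A} to reduce to its hypotheses --- with $H$ taken to be the ambient $\RR^d$, a collapse of the ambient cell structure restricts to a discrete Morse function on $\RS(\s,|\HA|)$ all of whose critical cells have dimension at most $\iota_c(d)$. Since the poset of strata of $(\RR^d,|\HA|)$ is canonically $\mc{P}(\HA)$, this upgrades to a PL collapse $\mbf{N}\searrow K$ onto a subcomplex $K$ PL-isomorphic to the order complex of $\mc{P}(\HA)$, with $\dim K\le\iota_c(d)$. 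Locally this is the assembly from the Proposition in the introduction --- a tubular piece around a codimension-$c$ flat, a Hopf-type piece around a deeper flat, and so on --- now organised over $\mc{P}(\HA)$ in arbitrary dimension.

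Next I would push the spine to the boundary and dualise. Because $c>1$ we have $2\iota_c(d)+1\le d-1=\dim\mr{B}(\HA)$, so by general position $K$ can be isotoped off itself into $\mr{B}(\HA)$, yielding the embedding $\varphi\colon\mc{P}(\HA)\hookrightarrow\mr{B}(\HA)$; by uniqueness of regular neighbourhoods a thickening of $\varphi(\mc{P}(\HA))$ inside $\mbf{N}$ is ambient isotopic to $\mbf{N}$ itself, so one may run the collapse of the previous step so that it first retracts $\mbf{N}$ onto the union of a collar of $\mr{B}(\HA)$ with such a thickening. Then the handle decomposition of $X$ complementary to this collapse, read relative to $\mr{B}(\HA)$, is handle-for-handle dual to the collapsing steps, and reassembling the dual handles on $\mbf{M}$ identifies $\HA^{\comp}\simeq\mbf{M}$ with $\mr{B}(\HA)/\varphi(\mc{P}(\HA))$. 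This is precisely ``pass to the Alexander dual of the Morse function'' from the proof of the Corollary, performed relative to the boundary manifold rather than a hyperplane section --- the promised duality with the Lefschetz theorem of \cite{A}.

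The hard part will be this last move: one must check that the dual handles genuinely reassemble into the quotient by an \emph{embedded} copy of the order complex --- equivalently, that the homotopy class of $\varphi$ is forced to be the ``boundary-spine'' class of $\mbf{N}$ --- and that all of this can be carried out with the piecewise-linear control provided by $\s$, not merely up to abstract homotopy equivalence. The hypothesis $c>1$ is used essentially in the dimension count above and cannot be relaxed: for genuine hyperplane arrangements $\iota_1(d)=d-1$, so the order complex has nothing to spare against $\dim\mr{B}(\HA)=d-1$ and need not embed at all --- which is exactly why the theorem is stated for $c>1$.
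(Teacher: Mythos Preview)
Your overall strategy---dualize the relative Morse function of Theorem~\ref{thm:hemisphere} to obtain a handle description of $\mbf{M}\simeq\HA^{\comp}$ built on $\mr{B}(\HA)$, and then identify the attaching data with the order complex of $\mc{P}(\HA)$---is exactly the paper's approach. The two arguments diverge at precisely the step you flag as ``the hard part''.

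The paper does \emph{not} push the spine $K$ into $\mr{B}(\HA)$ by a general-position dimension count and then invoke uniqueness of regular neighbourhoods. Instead it argues by \emph{induction on $d$}: the analysis of critical points of the relative Morse function on $(\RR^d,|\HA|)$ localises to a $c$-arrangement in $\RR^{d-1}$, and the base case is an arrangement of points in $\RR^c$, where $c>1$ makes the claim a triviality. This inductive description tracks the attaching spheres directly and shows they deform to an embedded copy of $\mc{P}(\HA)$, rather than producing an abstract embedding and then trying to match it with the handle data after the fact. Your uniqueness-of-regular-neighbourhoods step is the weak point: regular neighbourhood uniqueness compares two neighbourhoods of the \emph{same} subcomplex, not neighbourhoods of isotopic (or merely homotopy equivalent) copies, and once $K$ has been pushed into $\partial\mbf{N}$ there is no immediate reason its thickening inside $\mbf{N}$ recovers $\mbf{N}$. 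The paper's induction sidesteps this entirely.

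The second difference is the treatment of arrangements that are \emph{not} in projective general position. You defer this to ``the stratification machinery of \cite{A}'', but the paper handles it explicitly and separately: apply a generic projective transformation $\tau_1$ to force general position, then run a generic homotopy $\tau_t$ back to the identity and analyse what happens to the boundary manifold and complement each time a singularity crosses the hyperplane at infinity---again by induction on dimension. This step is not subsumed by Theorem~\ref{thm:hemisphere} itself and deserves its own paragraph in your write-up.
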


\begin{proof} The case of arrangements in projectively general position is immediate from the Morse function constructed \ref{thm:hemisphere}, as the perfect Morse function for \[(\RS(\s,H), \RS(\s,|\HA|\cap H))\ \cong\ (\R^d,|\HA|)\] descends to a perfect Morse function on $(\R^d,\mbf{N}(|\HA|))$. 

Now, the attaching spheres of critical cells in the relative Morse function can be deformed to an embedding of $\PP(\HA)$ for every $c>1$ by an induction on the dimension: The analysis of critical points of the Morse function on $\HA$ in $\R^d$ is reduced to the study of c-arrangements in $\R^{d-1}$. 

This reduces our reduces our inverstigation to the case of arrangements of points $\R^c$; this is a triviality as long as $c>1$.

For general arrangements, we apply a generic projective transformation $\tau_1$ of $\HA$, and consider a generic homotopy $\tau_t$ of $\tau_1$ to the identity $\tau_0=\mr{id}$, and analyze the change in the boundary manifold and complement every time a singularity is moved to infinity, which can again be done by induction as singularities move to the hyperplane at infinity.
\end{proof}

\section{Two refinements}

While not all properties of complexified line arrangements carry over, 
we can note two immediate refinements of the generalized Hironaka theorem that concern properties of the attaching map $\varphi$ and that still apply. The first concerns the naturality of the embedding~$\varphi$:

\begin{prp}
The embedding $\varphi:\mc{P}(\HA) \hookrightarrow \mr{B}(\HA)$ commutes with the natural homotopy equivalence $\varepsilon:\mc{P}(\HA)\hookrightarrow |\HA|$, i.e., if $\mr{p}:\mr{B}(\HA)\rightarrow |\HA|$ gives a commutative diagram
\[\begin{tikzcd}
\mc{P}(\HA) \arrow[hookrightarrow]{r}{\varepsilon } \arrow[hookrightarrow]{d}{ \varphi } &  \mid\HA\mid \\
\mr{B}(\HA)  \arrow[twoheadrightarrow]{ur}{\mr{p}}& \mbox{\ }
\end{tikzcd}
\]
\end{prp}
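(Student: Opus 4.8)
The plan is to trace through the construction of $\varphi$ in the proof of the generalized Hironaka Theorem and observe that at each inductive step the relevant data is compatible with the retraction $\mr{p}\colon\mr{B}(\HA)\to|\HA|$ of the regular neighborhood onto its core. First I would recall that $\mr{B}(\HA)=\partial\mbf{N}(|\HA|)$ comes with a canonical deformation retraction $\mr{p}$ of $\mbf{N}(|\HA|)$ onto $|\HA|$, and that $\varepsilon\colon\mc{P}(\HA)\hookrightarrow|\HA|$ is the standard homotopy equivalence of the order complex of the intersection poset with the underlying variety (it sends a chain $X_0<\dots<X_k$ into the stratum $X_0$, compatibly with the stratification). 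The claim is that the composite $\mr{p}\circ\varphi\colon\mc{P}(\HA)\to|\HA|$ is homotopic to $\varepsilon$ — equivalently, that the diagram commutes up to homotopy.

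The key steps, in order: (1) In the projectively-general-position case, the embedding $\varphi$ is built from the attaching spheres of the critical cells of the perfect relative Morse function on $(\R^d,\mbf{N}(|\HA|))$ supplied by Theorem~\ref{thm:hemisphere}. Each such critical cell is localized near a stratum of $|\HA|$ — its attaching sphere sits in the part of $\mr{B}(\HA)$ tubular around that stratum — and under $\mr{p}$ it maps into precisely that stratum. This localization is exactly what the chain in $\mc{P}(\HA)$ records, so $\mr{p}\circ\varphi$ agrees with $\varepsilon$ stratum-by-stratum. (2) The inductive reduction to arrangements in $\R^{d-1}$ (slicing by a general hyperplane) is compatible with both $\mr{p}$ and $\varepsilon$, because the restriction of the regular neighborhood to the slice is again a regular neighborhood and the intersection poset restricts accordingly; one checks the induction hypothesis for the slice gives the statement in the ambient dimension via the cell-attachment Corollary. (3) For general (not necessarily generic) arrangements, one carries the generic projective transformation $\tau_1$ and the homotopy $\tau_t\to\mr{id}$ through: each time a singularity crosses the hyperplane at infinity, both $\varphi$ and the pair $(\mr{p},\varepsilon)$ change by a controlled modification (again handled by the lower-dimensional case), and the commutativity is preserved throughout the homotopy since $\tau_t$ acts compatibly on $\mbf{N}(|\HA|)$, on $|\HA|$, and on $\mc{P}(\HA)$.

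The main obstacle I expect is step (1): making precise that the attaching sphere of each critical cell, although a priori only a subset of $\mr{B}(\HA)$, can be chosen inside the tubular piece around the corresponding stratum rather than wandering through neighboring tubular pieces. This requires using the fact that the Morse function of Theorem~\ref{thm:hemisphere} is obtained by the explicit inductive collapse (collapsing $H$ down to a codimension-one subspace $h$ at each stage), so that the unstable manifold of each critical point is genuinely supported near a single stratum; the compatibility with $\mr{p}$ is then immediate from the product structure of $\mbf{N}$ along that stratum. A secondary, more bookkeeping-type obstacle is checking that the modifications in step (3) do not disturb the homotopy class of $\mr{p}\circ\varphi$ — but since each modification is supported near the moving singularity and the relevant local picture is the point-arrangement case in $\R^c$, this is handled by the same localization argument as in step (1).
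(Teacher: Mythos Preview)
Your proposal is correct and follows essentially the same approach as the paper: the paper's entire proof is the single sentence ``Both propositions follow immediately from the inductive structure of the proof,'' and what you have written is precisely an unpacking of that inductive structure (the generic case via Theorem~\ref{thm:hemisphere}, the dimension reduction, and the projective-transformation homotopy for the non-generic case). Your discussion of the ``main obstacle'' in step~(1) is more careful than anything the paper makes explicit, but it is the right point to check and your resolution via the explicit collapse is the intended one.
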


The second property concerns a refinement of the attaching properties:

\begin{prp}
Let $\HA$ denote a $c$-arrangement in $\RR^d$, $c>1$. Then, if $\HA$ is non-essential, and $H$ is a general position hyperplane, then the quotient $\mr{B}(\HA)\rightarrow \HA^{\comp}$ commutes with restriction to $H$. In particular, the embedding $\varphi:\mc{P}(\HA)\rightarrow
\mr{B}(\HA)$ can be restricted to $\mr{B}(\HA\cap H)$.
\end{prp}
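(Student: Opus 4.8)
The plan is to turn the non-essentiality hypothesis into a genuine product decomposition of the arrangement, and then to check that each ingredient of the generalized Hironaka Theorem --- the regular neighborhood, the perfect relative Morse function of Theorem~\ref{thm:hemisphere}, and the quotient it induces --- respects that decomposition.

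First I would record the product structure. Non-essentiality gives a line direction $L$ along which $|\HA|$ is invariant, and a general-position hyperplane $H$ is transverse to it; choosing the identification $\RR^d\cong H\times L$ so that translation along $L$ is translation in the second factor, a dimension count shows that every $A\in\HA$ equals $(A\cap H)+L$, so that $|\HA|=|\HA\cap H|\times L$. In particular $C\mapsto C\cap H$ is an isomorphism $\mc{P}(\HA\cap H)\xrightarrow{\ \sim\ }\mc{P}(\HA)$. Next I would take the regular neighborhood $L$-equivariantly, $\mbf{N}(|\HA|)=\mbf{N}(|\HA\cap H|)\times L$, which yields
\[\mr{B}(\HA)=\mr{B}(\HA\cap H)\times L\qquad\text{and}\qquad\HA^{\comp}=(\HA\cap H)^{\comp}\times L,\]
with restriction to the slice $H$ realizing $\mr{B}(\HA\cap H)$ as $\mr{B}(\HA)\cap H$ and $(\HA\cap H)^{\comp}$ as $\HA^{\comp}\cap H$. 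Correspondingly one takes a combinatorial stratification $\s$ of $(\RR^d,|\HA|)$ that has $H$ among its subspaces and is of product form $\s_H\times L$, where $\s_H$ is a combinatorial stratification of $(H,|\HA\cap H|)$.

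With this set up, I would run the collapse of Theorem~\ref{thm:hemisphere} along a flag of subspaces whose first term is $H$. Because that argument is local and proceeds one subspace at a time, the perfect relative Morse function it produces on $(\RR^d,\mbf{N}(|\HA|))$ can be chosen $L$-equivariant, hence equal to the product with $L$ of the corresponding Morse function on $(H,\mbf{N}(|\HA\cap H|))$: its critical cells lying in the slice $H$ are exactly those of the $H$-section, with matching attaching spheres, and the remaining critical cells are their $L$-sweeps. Passing to Alexander duals as in the Lefschetz Corollary, the generalized Hironaka quotient $q\colon\mr{B}(\HA)\to\HA^{\comp}$ becomes $q_H\times\mr{id}_L$, where $q_H\colon\mr{B}(\HA\cap H)\to(\HA\cap H)^{\comp}$ is the quotient of the smaller arrangement. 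Thus $q$ carries $\mr{B}(\HA)\cap H=\mr{B}(\HA\cap H)$ onto $\HA^{\comp}\cap H=(\HA\cap H)^{\comp}$ via $q_H$, which is the asserted commutation; and since the identifications performed by $q$ (resp.\ $q_H$) sit along $\varphi(\mc{P}(\HA))$ (resp.\ $\varphi_H(\mc{P}(\HA\cap H))$), this product form forces $\varphi(\mc{P}(\HA))=\varphi_H(\mc{P}(\HA\cap H))\times\{0\}\subseteq\mr{B}(\HA\cap H)$, which is the ``in particular''.

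The main obstacle is precisely the equivariance step: one must verify that the discrete Morse function furnished by Theorem~\ref{thm:hemisphere} can be chosen so that the collapse never mixes the two sides of the slice $H$, and that the passage through Alexander duality and then to the boundary manifold can be carried out $L$-equivariantly even though $L\cong\RR$ is non-compact (so that no compactness-dependent step of the original argument is disrupted). The sensitive point is the behavior along the strata parallel to $H$, where the transversality underlying the product decomposition degenerates --- and this is exactly the place where an essential arrangement would fail to split off such a line $L$, which is why the non-essentiality hypothesis is invoked.
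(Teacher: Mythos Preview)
Your argument is correct and is precisely a fleshed-out version of the paper's own proof, which consists solely of the remark that both propositions ``follow immediately from the inductive structure of the proof.'' The product decomposition $\RR^d\cong H\times L$ and the resulting $L$-equivariance of the Morse data that you set up is exactly what that inductive step amounts to in the non-essential case.
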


Both propositions follow immediately from the inductive structure of the proof. In the special case of complex hyperplane arrangements, we in particular obtain:
\begin{cor}
Let $\HA$ denote a complex hyperplane arrangement in $\CC^2$. Then, for every vertex $v$ of $\HA$, the attaching map $\varphi$ can be restricted to a single sheat in $\mr{B}_v(\HA):=\mbf{N}(\mr{p}^{-1}(v))\subset \mr{B}(\HA)$, i.e., 
\[\varphi(\mc{P})\cap \mr{B}_v(\HA)\subset \{a\}\times S^2 \setminus \{\text{open disks}\}\subset \mr{B}_v(\HA) \cong S^1\times S^2 \setminus \{\text{open disks}\}.\]
\end{cor}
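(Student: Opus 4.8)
The plan is to localize the assertion at $v$, observe that the local model there is the boundary manifold of a pencil of lines, and then use that a pencil is, up to homotopy, a product, so that its Hironaka gluing data can only live in a single Hopf section. For the localization: by the first refinement we have $\mr{p}\circ\varphi=\varepsilon$, so $\varphi(\mc{P})\cap\mr{B}_v(\HA)$ is the $\varphi$-image of those simplices of the order complex $|\mc{P}(\HA)|$ that $\varepsilon$ sends into the neighbourhood of $v$ in $|\HA|$ used to define $\mr{B}_v(\HA)=\mbf{N}(\mr{p}^{-1}(v))$. Choosing that neighbourhood small enough, these are precisely the closed star of the vertex $v$ in $|\mc{P}(\HA)|$; since $v$ is a maximal element of the intersection poset lying above exactly the lines of $\HA$ through $v$, this star depends only on the central pencil $\HA_v$ formed by those $k\ge 2$ lines. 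Moreover, by the inductive construction of $\varphi$ in the proof of the General Hironaka Theorem, on this local piece $\varphi$ may be taken to agree with the Hironaka embedding attached to $\HA_v$, and $\mr{B}(\HA_v)\cong\mr{B}_v(\HA)$ is the given diffeomorphism with $S^1\times(S^2\setminus\{\text{open disks}\})$, whose $S^1$-factor is the common Hopf fibre of the link of $v$, i.e. the loop around $v$. So it suffices to treat $\HA_v$.

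Here one uses that the pencil complement splits as $\HA_v^{\comp}\cong\CC^{*}\times(\mathbb{CP}^1\setminus\{p_1,\dots,p_k\})$, the $\CC^{*}$-bundle $\CC^2\setminus|\HA_v|\to\mathbb{CP}^1\setminus\{p_1,\dots,p_k\}$ being trivial because its base, $S^2$ minus $k$ open disks, is homotopy equivalent to a graph. The matching splitting of the boundary manifold is $\mr{B}(\HA_v)\cong S^1\times(S^2\setminus\{\text{open disks}\})$, and the claim is that the perfect relative Morse function of Theorem~\ref{thm:hemisphere} applied to $\HA_v$ respects it: it is the product of the trivial Morse function on the $\CC^{*}$-factor — contributing the $S^1$ together with a coned-off half-line — with the Morse function of the $0$-dimensional arrangement $\{p_1,\dots,p_k\}\subset\mathbb{CP}^1$, which is exactly the reduction "to arrangements of points" carried out in the proof of the General Hironaka Theorem. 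Since $|\mc{P}(\HA_v)|$ is a star with the $k$ lines as leaves and its image $\varphi(\mc{P}(\HA_v))$ is the corresponding one-dimensional locus for $k$ points in $\mathbb{CP}^1$, every critical cell carrying $\varphi$ lies in a single Hopf section $\{a\}\times(S^2\setminus\{\text{open disks}\})$, with $a\in S^1$ the basepoint. Transporting back along $\mr{B}(\HA_v)\cong\mr{B}_v(\HA)$ gives the desired inclusion. (Alternatively one can invoke the second refinement after a generic projective transformation pushing $v$ towards infinity, for which the restriction $\mr{B}(\HA\cap H)$ is itself a Hopf section.)

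The main obstacle is the middle claim that this Morse function is genuinely a product, equivalently that the attaching spheres of the critical cells accumulating at $v$ can be isotoped off the Hopf-fibre direction — i.e. that the monodromy of the $S^1$-bundle $\mr{B}_v(\HA)\to S^2\setminus\{\text{open disks}\}$ acts trivially on them, so that they do not wind around $v$. Establishing this means running the inductive collapse of Theorem~\ref{thm:hemisphere} for $\HA_v$ through a generic affine line $H\cong\CC$, where it degenerates to the elementary collapse of a regular neighbourhood of $k$ points, and checking that the resulting cells, carried back to $\CC^2\setminus|\HA_v|$, acquire no winding around $v$. Granting this, the triviality of the $\CC^{*}$- and $S^1$-bundles and the identification of the collapsed intersection poset are routine.
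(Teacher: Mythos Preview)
Your argument follows the paper's own route: the corollary is stated there as an immediate consequence of the two refinement propositions together with the inductive structure of the main proof, and you have simply unpacked this---localizing at $v$ via the first refinement, identifying the local model with the pencil $\HA_v$, and then invoking the inductive reduction to an arrangement of points in a generic line. The ``obstacle'' you isolate (that the attaching cells acquire no winding in the Hopf direction) is precisely what the second refinement, applied to the non-essential pencil $\HA_v$, is asserting, so your alternative route through that proposition already closes the gap; the paper itself provides no finer justification than this.
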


\subsection*{Acknowledgements} We wish to thank Alex Suciu for inspiring this note.

{\small
\bibliographystyle{myamsalpha}
\bibliography{MinC-Arr}}

\providecommand{\noopsort}[1]{}\def\cprime{$'$}
\providecommand{\bysame}{\leavevmode\hbox to3em{\hrulefill}\thinspace}
\providecommand{\MR}{\relax\ifhmode\unskip\space\fi MR }
\providecommand{\MRhref}[2]{%
  \href{http://www.ams.org/mathscinet-getitem?mr=#1}{#2}
}
\providecommand{\href}[2]{#2}
\begin{thebibliography}{Ryb11}

\bibitem[Adi14]{A}
K.~A. Adiprasito, \emph{Combinatorial stratifications and minimality of
  2-arrangements}, J. Topol. \textbf{7} (2014), no.~4, 1200--1220.

\bibitem[CS08]{CS}
D.~C. Cohen and A.~I. Suciu, \emph{The boundary manifold of a complex line
  arrangement}, Groups, homotopy and configuration spaces, Geom. Topol.
  Monogr., vol.~13, Geom. Topol. Publ., Coventry, 2008, pp.~105--146.
  \MR{2508203 (2010c:32051)}

\bibitem[DP03]{DimcaPapadima}
A.~Dimca and S.~Papadima, \emph{Hypersurface complements, {M}ilnor fibers and
  higher homotopy groups of arrangements}, Ann. of Math. (2) \textbf{158}
  (2003), 473--507.

\bibitem[Hir01]{Hironaka}
E.~Hironaka, \emph{Boundary manifolds of line arrangements}, Math. Ann.
  \textbf{319} (2001), no.~1, 17--32.

\bibitem[Ryb11]{Rybnikov}
G.~L. Rybnikov, \emph{On the fundamental group of the complement of a complex
  hyperplane arrangement}, Funktsional. Anal. i Prilozhen. \textbf{45} (2011),
  71--85.

\bibitem[Sch86]{Schroeder}
V.~Schroeder, \emph{Rigidity of nonpositively curved graphmanifolds}, Math.
  Ann. \textbf{274} (1986), no.~1, 19--26. \MR{834102 (87h:53054)}

\end{thebibliography}

\end{document}